\documentclass[a4paper,12pt]{amsart}

\usepackage[headings]{fullpage}

\usepackage{amsfonts,graphics,amsmath,mathrsfs,amsthm,amscd,amssymb,latexsym,euscript,enumerate}
\usepackage{kotex}
\usepackage{epsfig}
\usepackage{flafter}
\usepackage[all,cmtip,line]{xy}
\usepackage{array}
\usepackage[english]{babel}
\usepackage{overpic}
\usepackage{subfig}
\usepackage{multirow}
\usepackage{microtype}
\usepackage{wrapfig}
\usepackage{longtable}
\usepackage{supertabular}
\usepackage[shortlabels]{enumitem}
\usepackage{tikz}
\usetikzlibrary{positioning}
\usepackage{tikz-cd}
\usepackage{float}
\allowdisplaybreaks

\usepackage[dvipsnames,svgnames,table]{xcolor}
\usepackage{graphicx}
\usepackage{hyperref}
\hypersetup{
    colorlinks=true,
    linkcolor=blue,
    citecolor=blue,
    filecolor=blue,
    urlcolor=blue
}

\newtheorem{theorem}{Theorem}[section]
\newtheorem{lemma}[theorem]{Lemma}

\newtheorem*{theorem*}{Theorem}

\theoremstyle{plain}

\theoremstyle{definition}
\newtheorem{definition}[theorem]{Definition}
\newtheorem{definition-lemma}[theorem]{Definition-Lemma}

\newtheorem{remark}[theorem]{Remark}

\numberwithin{equation}{section}

\newcommand{\R}{\mathbb{R}}
\newcommand{\Z}{\mathbb{Z}}

\newcommand{\Q}{\mathbb{Q}}

\def\Spec{\operatorname{Spec}}

\def\Supp{\operatorname{Supp}}

\def\lct{\operatorname{lct}}

\usepackage{mathtools}

\DeclarePairedDelimiterX{\norm}[1]{\lVert}{\rVert}{#1}

\title[On the quasi-monomiality of the $\alpha$- and $\delta$-invariants]
{On the quasi-monomiality of the $\alpha$- and $\delta$-invariants}

\author[D. Kim]{Donghyeon Kim}
\author[D.-W. Lee]{Dae-Won Lee}
\address[Donghyeon Kim]{Department of Mathematics, Yonsei University, 50 Yonsei-ro, Seodaemun-gu, Seoul 03722, Republic of Korea}
\email{narimial0@gmail.com, whatisthat@yonsei.ac.kr}
\address[Dae-Won Lee]{Department of Mathematics, Ewha Womans University, 52 Ewhayeodae-gil, Seodaemun-gu, Seoul 03760, Republic of Korea}
\email{daewonlee@ewha.ac.kr}

\thanks{The authors are partially supported by Samsung Science and Technology Foundation under Project Number SSTF-BA2302-03. The second author is partially supported by Basic Science Research Program through the National Research Foundation of Korea (NRF) funded by the Ministry of Education (No. RS-2023-00237440 and 2021R1A6A1A10039823). The first author thanks Lu Qi for informing the author of the paper \cite{Pen25}}

\subjclass[2010]{14B05, 14E30}
\date{\today}
\keywords{stability thresholds, quasi-monomial valuations}

\begin{document}

\begin{abstract}
In this paper, we show that for any projective klt pair $(X,\Delta)$ over an algebraically closed field of characteristic \(0\) and any big $\Q$-Cartier $\Q$-divisor $L$ on $X$, the invariants $\alpha(X,\Delta,L)$ and $\delta(X,\Delta,L)$ are computed by quasi-monomial valuations, without any uncountability assumption on the base field.
\end{abstract}

\maketitle

%\tableofcontents

%%%%%%%%%%%%%%%%%%%%%%%%%%%%%%%%%%%%%%%%%%%%%%%%%%%%%
\section{Introduction}
The $\alpha$-invariant was introduced by Tian (see \cite{Tia87}) in his study of the existence of Kähler--Einstein metrics on Fano varieties. The $\delta$-invariant, introduced by Fujita and Odaka (see \cite{FO18}), later became a central algebro-geometric invariant in the study of K-stability. Together with the valuative criterion for K-stability \cite{BJ20,LXZ22}, these invariants provide a useful bridge between birational geometry and existence problems for canonical metrics. 

\smallskip

For a projective klt pair $(X,\Delta)$ and an ample $\Q$-Cartier $\Q$-divisor $L$ on $X$, one can define the $\delta$-invariant $\delta(X,\Delta,L)$, which is related to Ding stability (cf. \cite[Theorem 3.16]{BJ23}). More generally, one can define the stability thresholds $\alpha(X,\Delta,L)$ and $\delta(X,\Delta,L)$ for a big $\Q$-Cartier $\Q$-divisor $L$ on $X$. These invariants admit valuative descriptions in terms of the log discrepancy $A_{X,\Delta}(\nu)$ and the asymptotic invariants $S(\nu,L)$ and $T(\nu,L)$ for $\nu\in \mathrm{Val}_X^\ast$. We recall these definitions in Subsection \ref{subsect:thresholds}. It is therefore natural to ask whether the corresponding valuative infima are attained.

\smallskip

The main result of this paper gives a positive answer over arbitrary algebraically closed fields of characteristic $0$.

\begin{theorem} \label{thm-main}
Let $(X,\Delta)$ be a projective klt pair over an algebraically closed field $k$ of characteristic $0$, and let $L$ be a big $\Q$-Cartier $\Q$-divisor on $X$. Then there exist quasi-monomial valuations computing $\alpha(X,\Delta,L)$ and $\delta(X,\Delta,L)$.
\end{theorem}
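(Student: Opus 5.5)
The plan is to reduce to the known case of an uncountable algebraically closed base field, and then descend. Over an uncountable field, Blum--Liu--Xu (and Xu's work) show that $\delta(X,\Delta,L)$ and $\alpha(X,\Delta,L)$ for big $L$ are computed by quasi-monomial valuations. The existence argument goes through generic limits of complements or of $m$-basis type divisors. It is this step that uses uncountability.

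Given $(X,\Delta,L)$ over $k$, choose an uncountable algebraically closed field $K\supset k$, for instance the algebraic closure of $k(t_i : i\in I)$ with $I$ uncountable. Base change to $(X_K,\Delta_K,L_K)$. The first step is to show that the invariants do not change:
\[
\alpha(X_K,\Delta_K,L_K)=\alpha(X,\Delta,L),\qquad \delta(X_K,\Delta_K,L_K)=\delta(X,\Delta,L).
\]
For $\delta$ I would use the description $\delta=\lim_m \delta_m$. Each $\delta_m$ is an infimum of log canonical thresholds of $m$-basis type divisors. Log canonical thresholds are invariant under algebraically closed field extension, and $H^0(mL)$ commutes with flat base change. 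An $m$-basis type divisor over $K$ is defined over a finitely generated $k$-subalgebra, and a general specialization to a $k$-point does not decrease the lct by lower semicontinuity; this gives the reverse inequality. The same argument with $\alpha=\inf_m \inf_{D\in|mL|_\Q}\lct$ handles $\alpha$.

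By the uncountable case, there is a quasi-monomial valuation $v$ on $X_K$ computing $\delta(X_K,\Delta_K,L_K)$. Such a $v$ lives on a log smooth model $(Y,E)\to X_K$ with weight vector $\alpha\in\R_{>0}^r$. The data $(Y,E,\eta)$ is defined over a finitely generated $k$-algebra $R\subset K$, giving a family over $S=\Spec R$. For each $k$-point $s\in S$ we get quasi-monomial valuations $v_{s,\beta}$ on $X$ for all weights $\beta$ on the corresponding simplicial cone. The second step is to prove that for $s$ general, $v_{s,\alpha}$ still computes the invariant. Linearity of $A$ in $\beta$ on the cone is fiberwise constant. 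I would show $S(v_{s,\beta},L)\ge S(v_\beta,L_K)$ for very general $s$, and likewise for $T$. The filtrations on $H^0(mL)$ induced by $v_{s,\beta}$ have jump dimensions that are constructible in $s$; they agree with the generic fiber over a dense open set for each fixed $(m,\beta)$ with $\beta$ rational.

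The main obstacle is here. Uniformity over all $m$ and all $\beta$ requires intersecting countably many open sets, which is exactly what fails over a countable $k$. To get around it, I would avoid very general points. Instead I would use the fact that the generic point of $S$ maps to $\Spec K$, so one can use the $K$-point itself via a $k$-embedding trick. Since $X$ and $L$ are defined over $k$, any $k$-algebra map $R\to k$ gives a valuation, and the needed inequality is only $S(v_{s,\alpha},L)\ge S(v,L_K)$, i.e.\ $\delta$ computed. I would get it from semicontinuity: $\dim \mathcal F^{\lambda}_{v_s}H^0(mL)$ is upper semicontinuous in $s$, so the special fiber can only have larger filtration pieces. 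That gives $S(v_s)\ge S(v)$ and $T(v_s)\ge T(v)$ at every $k$-point of $S$. Then $A(v_s)/S(v_s)\le A(v)/S(v)=\delta(X_K)=\delta(X)$, and since $\delta(X)$ is an infimum, $v_s$ computes $\delta(X,\Delta,L)$. The same reasoning applies to $\alpha$ with $T$. The delicate point is justifying this upper semicontinuity of $\dim\mathcal F^\lambda$ for real weights in a family of log smooth pairs; I would prove it via ideals $\mathfrak a_\lambda(v_{s,\alpha})$ forming a flat family of monomial-type ideals on $Y_S$, and then push forward.
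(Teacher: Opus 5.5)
Your proposal is correct and follows essentially the paper's route: base change to an uncountable $K$, take a quasi-monomial minimizer there (for big $L$ this is \cite[Theorem 1.2]{Pen25}, not Blum--Liu--Xu), spread out a fiberwise log resolution, specialize to a $k$-point, and use upper semicontinuity of $\dim\mathcal{F}^{\lambda}$ to get $S(\nu,L)\ge S(\omega,L_K)$ with equal log discrepancy. The paper proves that semicontinuity (Lemma~\ref{dvr}) via expansion coefficients and a flatness criterion, rather than your flat family of valuation ideals plus semicontinuity of $h^0$. Only the easy inequality $\delta(X_K,\Delta_K,L_K)\le\delta(X,\Delta,L)$ (Lemma~\ref{lem:one-sided-base-change}) is needed, so your reverse-direction step is superfluous; as written its direction is also off, since that step needs that specialization cannot raise the lct (lower semicontinuity), not that it cannot lower it.
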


We recall the previously known cases. When the base field is uncountable and $L$ is ample, the result follows from \cite[Theorem E]{BJ20} together with \cite[Theorem 1.1]{Xu20}. The corresponding result for big divisors over an uncountable base field was obtained in \cite{Pen25}. Over countable base fields, the result was known when $L=-(K_X+\Delta)$ is ample under the additional numerical assumption $\delta(X,\Delta)\le 1$ in \cite[Theorem 4.6]{BLX22}. The condition on the $\delta$-invariant was further relaxed in \cite[Theorem 3.6]{LXZ22}, where the bound $\delta(X,\Delta)\le 1$ is replaced by $\delta(X,\Delta)<\frac{\dim X+1}{\dim X}$. 

Theorem \ref{thm-main} removes these restrictions on the base field, on the divisor $L$, and on the $\delta$-invariant.

\smallskip

We now explain the idea of the proof. Let $K/k$ be an uncountable algebraically closed field extension. By Lemma \ref{lem:one-sided-base-change}, we obtain $\delta(X_K,\Delta_K,L_K)\le \delta(X,\Delta,L)$ and similarly for $\alpha$. Next, we use a weak upper semicontinuity property for the $S$- and $T$-invariants (cf. Lemma \ref{dvr}) extending \cite[Remark 4.16]{CP21}. By \cite[Theorem 1.2]{Pen25}, there exists a quasi-monomial valuation $\omega$ over $X_K$ computing $\delta(X_K,\Delta_K,L_K)$. The main task is to produce from $\omega$ a quasi-monomial valuation $\nu$ over the original variety $X$.

\smallskip

We descend $K$ and $\omega$ to an algebraically closed field $F$ with $k\subset F\subset K$ such that $F/k$ has finite transcendence degree. Then the spreading-out argument gives us a family $X_U\to U$ with $X$ as a closed fiber. By Lemma \ref{lem:descent}, one obtains a quasi-monomial valuation $\nu$ over $X$ such that $A_{X,\Delta}(\nu)=A_{X_K,\Delta_K}(\omega)$ and $S(\nu,L)\geq S(\omega,L_K)$. Hence, we obtain the following chain of inequalities:
$$\delta(X,\Delta,L)\leq \frac{A_{X,\Delta}(\nu)}{S(\nu,L)}\leq \frac{A_{X_K,\Delta_K}(\omega)}{S(\omega,L_K)}=\delta(X_K,\Delta_K,L_K)\leq \delta(X,\Delta,L),$$
which implies that the quasi-monomial valuation $\nu$ over $X$ computes $\delta(X,\Delta,L)$. Replacing $S$ by $T$ gives the assertion for $\alpha$.

\smallskip

The rest of this paper is organized as follows. In Section \ref{2}, we recall the necessary notation for valuations, stability thresholds, and models, and prove the preliminary lemmas used in the proof of Theorem \ref{thm-main}. In Section \ref{3}, we give a proof of the main theorem.

\section{Preliminaries} \label{2}
In this paper, all schemes are assumed to be separated, and every field is assumed to have characteristic $0$. A \emph{variety} means an integral scheme of finite type over an algebraically closed field. We collect the notation used throughout the paper. For further details on the notation, we refer the reader to \cite{Fuj17,Kol13,KM98}.

\begin{itemize}
    \item Let $X$ be a normal scheme. For an effective $\Q$-Weil divisor $\Delta$ on $X$, we say that $(X,\Delta)$ is a \emph{couple}. If $K_X+\Delta$ is $\Q$-Cartier, we say that $(X,\Delta)$ is a \emph{pair}.
    \item Let $f\colon X'\to X$ be a proper birational morphism of normal schemes with $X'$ normal, and let $E$ be a prime divisor on $X'$. Let $(X,\Delta)$ be a pair. We define
    $$ A_{X,\Delta}(E)\coloneqq \mathrm{mult}_E(K_{X'}-f^*(K_X+\Delta))+1,$$
    and call it the \emph{log discrepancy} of $E$ with respect to $(X,\Delta)$.
    \item A pair $(X,\Delta)$ is \emph{log canonical (lc)} if $A_{X,\Delta}(E)\ge 0$ for every prime divisor $E$ over $X$, and \emph{Kawamata log terminal (klt)} if $A_{X,\Delta}(E)>0$ for every such $E$.
    \item Let $X\to S$ be a flat morphism of schemes. For a (geometric) point $s\colon \Spec k'\to S$, we write $X_s\coloneqq  X\times_S s$. For a point $s\colon \Spec k'\to S$, we denote the associated geometric point by $\overline{s}\colon \Spec \overline{k'}\to S$. Given a $\Q$-Weil divisor $\Delta$ on $X$ whose support contains no fiber $X_s$ for $s \in S$, we define $\Delta_s$ to be its cycle-theoretic restriction over $s\in S$. More generally, for a variety $X$ over $k$ and a scheme $S$, we denote by $X_S\coloneqq X\times_k S$. If $S\coloneqq \Spec R$ is an affine scheme, then $X_R\coloneqq X_S$.
    \item Let $X'\to X$ be a log resolution of $X$, and let $E\subseteq X'$ be an snc divisor. If $X'\to X$ is an isomorphism outside the support of $E$, then we say that $(X',E)\to X$ is a \emph{log smooth model}. For a couple $(X,\Delta)$, if $(X',E)\to X$ is both a log smooth model and a log resolution of $(X,\Delta)$, then we say that $(X',E)\to (X,\Delta)$ is a \emph{log smooth model}.
\end{itemize}

We define the notion of \emph{$\Q$-Gorenstein family of pairs}.

\begin{definition}
Let $(X,\Delta)$ be a couple, and let $S$ be a smooth variety or the spectrum of a regular local ring essentially of finite type over an algebraically closed field. We call $f\colon (X,\Delta)\to S$ a \emph{$\Q$-Gorenstein family of pairs} over $S$ if
\begin{itemize}
    \item[(1)] $X$ is flat over $S$ and $K_{X/S}+\Delta$ is $\Q$-Cartier; and
    \item[(2)] for every $s\in S$, $X_s$ is normal and $\Supp \Delta$ does not contain $X_s$.
\end{itemize}
If every fiber $(X_s,\Delta_s)$ is klt, then we say that $(X,\Delta)\to S$ is a \emph{$\Q$-Gorenstein family of klt pairs}.
\end{definition}

We recall the notion of a \emph{fiberwise log resolution}, which was first introduced in \cite[2.3]{Xu20}.

\begin{definition}[{cf. \cite[2.3]{Xu20}}]
Let $(X,\Delta)\to S$ be a $\Q$-Gorenstein family of pairs over $S$, and let $(X',E)\to (X,\Delta)$ be a log smooth model. Then, we say that $(X',E)\to S$ is a \emph{fiberwise log resolution} of $(X,\Delta)\to S$ if 
\begin{itemize}
    \item[(a)] the geometric fibers of $(X',E)\to (X,\Delta)$ are log resolutions, and
    \item[(b)] every stratum of $E$ is geometrically irreducible.
\end{itemize}
\end{definition}

\begin{remark} \label{nojeokbong}
Let $(X',E)\to (X,\Delta)$ be a fiberwise log resolution over $S$. Then, for every point $s\in S$ and every stratum $W$ of $E$, $W_{\overline{s}}$ is smooth over $\overline{k(s)}$. Moreover, the morphism $W\to S$ is flat by the miracle flatness (cf. \cite[Lemma 00R4]{Stacks}). Thus, $W$ is smooth over $S$.
\end{remark}

Let us present the following lemmas.

\begin{lemma}[{cf. \cite[Lemma 05D5]{Stacks}}] \label{cutest}
Let $R$ be a noetherian local ring, $A$ an integral $R$-algebra, $I\subseteq A$ an ideal, and let $B\coloneqq A/I$ be a formally smooth $R$-algebra.
\begin{itemize}
    \item[\emph{(a)}] There exists an $R$-algebra section $s\colon B\to \widehat{A}\coloneqq \varprojlim A/I^n$ of the natural surjection $\widehat{A}\to B$.
    \item[\emph{(b)}] Assume in addition that $I=(y_1,\cdots,y_r)$ and that $y_1,\cdots,y_r$ is a regular sequence in $A$. Then, after choosing $s$, there is a unique $B$-algebra isomorphism
    $$ r\colon B[[T_1,\cdots,T_r]]\cong \widehat{A} $$
    such that $r(T_i)=y_i$.
\end{itemize}
\end{lemma}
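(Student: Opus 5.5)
The plan is to use the infinitesimal lifting property of formally smooth algebras for part (a), and then compare associated graded rings for part (b).

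For (a), we build compatible $R$-algebra sections $s_n\colon B\to A/I^n$ of the surjections $A/I^n\to B$ by induction on $n$, and then pass to the limit.
\begin{itemize}
\item For $n=1$ we take $s_1=\mathrm{id}_B$.
\item Suppose $s_n$ has been constructed. Consider the surjection $A/I^{n+1}\to A/I^n$. Its kernel is $I^n/I^{n+1}$, which squares to zero.
\item Formal smoothness of $B$ over $R$ gives a lift of $s_n$ to an $R$-algebra map $s_{n+1}\colon B\to A/I^{n+1}$. The defining property of formal smoothness is exactly lifting along surjections with nilpotent (here square-zero) kernel.
\item By construction $s_{n+1}$ composed with the projection equals $s_n$. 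Hence it is again a section of $A/I^{n+1}\to B$.
\end{itemize}
The $s_n$ form a compatible system, so they define $s=\varprojlim s_n\colon B\to\widehat A$, which is an $R$-algebra section of $\widehat A\to B$. If the $R$ in the definition of formal smoothness carries its $\mathfrak m$-adic topology, we instead apply the discrete-topology lifting property (formally smooth for the discrete topology). This is the version in \cite[Lemma 05D5]{Stacks}, and I would cite that if needed.

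For (b), the section $s$ makes $\widehat A$ a $B$-algebra. Since $\widehat A$ is $I\widehat A$-adically complete, the universal property of power series gives a unique continuous $B$-algebra map $r\colon B[[T_1,\dots,T_r]]\to\widehat A$ with $r(T_i)=y_i$. Uniqueness comes from requiring continuity, i.e.\ $(T)$-adic to $I$-adic, which holds because $r(T_i)\in I$.

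It remains to show $r$ is bijective. Both sides are complete and separated for the filtrations by $(T)^n$ and $I^n\widehat A$, and $r$ respects these filtrations. So it suffices to show that the induced map on associated graded rings
\[
\mathrm{gr}(r)\colon B[T_1,\dots,T_r]\to \bigoplus_n I^n/I^{n+1}
\]
is an isomorphism. This is where the regular sequence hypothesis enters: for a regular sequence generating $I$, the natural map $(A/I)[T_1,\dots,T_r]\to \mathrm{gr}_I(A)$ is an isomorphism (the standard quasi-regularity of regular sequences). We also use that $\mathrm{gr}_I(A)=\mathrm{gr}_{I\widehat A}(\widehat A)$.

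One point needs checking: the $B$-module structure on $I^n/I^{n+1}$ induced via $s$ must agree with the natural $A/I$-module structure. It does, because $s$ is a section modulo $I$. Since $\mathrm{gr}(r)$ is an isomorphism, a standard filtered-completeness argument (surjectivity by successive approximation, injectivity by separatedness) gives that $r$ is an isomorphism.

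The main obstacle is the care needed over which topology formal smoothness refers to in (a). I would handle it by citing the Stacks lemma directly.
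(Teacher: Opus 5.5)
Your proposal is correct and takes essentially the same route as the paper. Part (a) lifts sections $B\to A/I^n$ inductively along the square-zero surjections $A/I^{n+1}\to A/I^n$. Part (b) takes the unique continuous $B$-algebra map $r$ with $r(T_i)=y_i$ and uses the regular-sequence isomorphism $I^n/I^{n+1}\cong \mathrm{Sym}^n_B(I/I^2)$. Your comparison of associated graded rings followed by a completeness argument is just a repackaging of the paper's step: it proves each $P/K^n\to \widehat{A}/J^n\cong A/I^n$ is an isomorphism by induction with the Five Lemma, then passes to the inverse limit.
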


\begin{proof}
Note that the proof of (b) is almost identical to that of \cite[Lemma 05D5]{Stacks}.

\smallskip

Set $A_n\coloneqq A/I^n$ for $n\ge 1$. We will construct compatible $R$-algebra maps $s_n\colon B\to A_n$ that are sections of $A_n\to B$.

\smallskip

Start with $s_1\colon A_1\coloneqq A/I\to A/I$. Now, suppose $s_n\colon B\to A_n$ has been constructed. Consider the surjection $A_{n+1}\to A_n$. Its kernel is $I^n/I^{n+1}$, and this is square-zero. Since $B$ is formally smooth over $R$, the $R$-algebra map $s_n\colon B\to A_n$ lifts to a $R$-algebra map $s_{n+1}\colon B\to A_{n+1}$. Thus, by induction, we obtain a compatible system $(s_n)_n$. Passing to the inverse limit gives
$$ s\coloneqq B\to \widehat{A}.$$
By construction, composing with $\widehat{A}\to A/I=B$ gives $\mathrm{id}_B$. This proves (a).

\smallskip

Now, assume $I=(y_1,\cdots,y_r)$ and $y_1,\cdots,y_r$ is a regular sequence. Let $J\coloneqq I\widehat{A}$. Then, $\widehat{A}/J^n\cong A/I^n$ for every $n\ge 1$. Using the section $s\colon B\to \widehat{A}$, we may regard $\widehat{A}$ as a $B$-algebra. Define
$$ P\coloneqq B[[T_1,\cdots,T_r]],\,\,\,K\coloneqq (T_1,\cdots,T_r),$$
and let $r\colon P\to \widehat{A}$ be the unique continuous $B$-algebra map with $r(T_i)=y_i$ under the adic topology.

\smallskip

We claim that $r$ is an isomorphism. Note that
\begin{itemize}
    \item Since $y_1,\cdots,y_r$ is a regular sequence, $\mathrm{Sym}^n_B(I/I^2)\cong I^n/I^{n+1}$ (cf. \cite[061N, Lemma 00LN]{Stacks}), and
    \item thus, $I^n/I^{n+1}$ is a free $B$-module with basis given by the degree-$n$ monomials $\bar{y}^{\alpha}=\bar{y}_1^{\alpha_1}\cdots \bar{y}^{\alpha_r}_r$, $|\alpha|=n$.
\end{itemize}
For each $n\ge 1$, let $r_n\colon P/K^n\to \widehat{A}/J^n\cong A/I^n$ be the induced map. We will prove by induction on $n$ that $r_n$ is an isomorphism for all $n$. For $n=1$, this is immediate. Assume $r_n$ is an isomorphism. Consider the commutative diagram with exact rows
$$
\begin{tikzcd}
0 \ar{r}& K^n/K^{n+1}\ar{r}\ar{d} & P/K^{n+1}\ar{r}\ar["r_{n+1}"]{d} & P/K^n\ar{r}\ar["r_n"]{d} & 0 \\
0 \ar{r}& J^n/J^{n+1}\ar{r} & \widehat{A}/J^{n+1}\ar{r} & \widehat{A}/J^n\ar{r} & 0.
\end{tikzcd}
$$
The right vertical map is an isomorphism. On the other hand, $K^n/K^{n+1}$ is the free $B$-module on the degree-$n$ monomials in the $T_i$. Under $K^n/K^{n+1}\to J^n/J^{n+1}$, the class of $T^{\alpha}$ maps to the class of $y^{\alpha}$. Thus, the left vertical map is an isomorphism. Hence, by the Five Lemma, $r_{n+1}$ is an isomorphism.

\smallskip

Therefore, $r_n$ is an isomorphism for every $n$. Passing to inverse limits gives (b).
\end{proof}

\begin{lemma} \label{elementary}
Let $R$ be a noetherian local ring, and let $M$ be a flat $R$-module. Let $\mathfrak{m}$ be the maximal ideal of $R$, $k\coloneqq R/\mathfrak{m}$, and suppose that $u_1,\cdots,u_{\ell}\in M$ are linearly independent over $k$ after reduction mod $\mathfrak{m}$. Then, $u_1,\cdots,u_{\ell}$ are $R$-linearly independent.
\end{lemma}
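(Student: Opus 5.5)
The plan is to reduce the statement to the vanishing of a Tor group. Suppose $\sum_{i=1}^{\ell} a_i u_i=0$ with $a_i\in R$. We want to show that every $a_i=0$.

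The first step is to use the local criterion for flatness in its equational form. Since $M$ is flat, the equational criterion applies to the relation $\sum a_i u_i=0$. It gives elements $v_1,\dots,v_m\in M$ and $b_{ij}\in R$ with $u_i=\sum_j b_{ij}v_j$ and $\sum_i a_i b_{ij}=0$ for every $j$.

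Next we reduce mod $\mathfrak{m}$. The images $\bar u_i$ lie in the $k$-span of the $\bar v_j$, and they are $k$-linearly independent. So the matrix $(\bar b_{ij})$ has rank $\ell$, and some $\ell\times\ell$ minor of $(b_{ij})$ is a unit in $R$. Choose such a minor, indexed by columns $j_1,\dots,j_\ell$. The relations $\sum_i a_i b_{ij_t}=0$, for $t=1,\dots,\ell$, say that the row vector $(a_1,\dots,a_\ell)$ is annihilated by an invertible $\ell\times\ell$ matrix. Multiplying by the adjugate shows that all $a_i=0$.

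This argument uses only flatness and locality of $R$; noetherianity is not needed.

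An alternative route is via Tor. Extend the map $R^\ell\to M$, $e_i\mapsto u_i$, with cokernel $Q$. By flatness of $M$, the vanishing of $\mathrm{Tor}_1(k,M)$ gives injectivity of $N\otimes k\to R^\ell\otimes k$, where $N$ is the image of $R^\ell\to M$. However, $N$ need not be finitely presented, so Nakayama is awkward to apply there. For that reason the equational criterion is the cleaner tool.

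The only step that needs care is the rank argument: passing from independence of the $\bar u_i$ to a unit minor. Concretely, if all $\ell\times\ell$ minors of $(b_{ij})$ lay in $\mathfrak{m}$, then $(\bar b_{ij})$ would have rank less than $\ell$. There would then be a nonzero $c\in k^\ell$ with $\sum_i c_i\bar b_{ij}=0$ for all $j$, which forces $\sum_i c_i\bar u_i=0$ and contradicts independence.
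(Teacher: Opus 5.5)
Your proof is correct and is essentially the paper's: both apply the equational criterion of flatness to the relation and use the $k$-linear independence of the $\bar u_i$ to force units among the coefficients $b_{ij}$. The only difference is that the paper picks a single unit entry (your $b_{\ell j_0}$), solves for one coefficient and inducts on $\ell$, whereas you pick a unit $\ell\times\ell$ minor and conclude in one step with the adjugate; two cosmetic slips, neither affecting the argument, are that you are using the equational criterion rather than the local criterion for flatness, and that your abandoned Tor aside is garbled (the natural map goes $R^\ell\otimes k\to N\otimes k$, not the other way).
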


\begin{proof}
Suppose, to the contrary, that $u_1,\cdots,u_{\ell}$ are $R$-linearly dependent, say $\sum^{\ell}_{i=1} f_iu_i=0$ for some $f_i\in R$. Then, by the equational criterion of flatness (cf. \cite[Lemma 00HK]{Stacks}), there exist $y_j\in M$ and $a_{ij}\in R$ such that 
\begin{equation} \label{eq1}
u_i=\sum_j a_{ij}y_j
\end{equation} 
for each $i$, and 
\begin{equation} \label{eq2}
\sum_i f_ia_{ij}=0    
\end{equation} 
for every $j$. We prove by induction on $\ell$ that all $f_i$ are zero.

\smallskip

For $\ell=1$, since the image of $u_1$ in $M/\mathfrak{m}M$ is nonzero, some $a_{1j}$ is a unit. Hence, $f_1a_{1j}=0$ implies $f_1=0$. Assume that the assertion is known for $\ell-1$. Since the image of $u_{\ell}$ in $M/\mathfrak{m}M$ is nonzero, $a_{\ell j_0}$ is a unit for some $j_0$. Then, by (\ref{eq2}),
$$ f_{\ell}=-\sum^{\ell-1}_{i=1}f_ia_{ij_0}a^{-1}_{\ell j_0},$$
and thus by (\ref{eq1}),
$$ \sum^{\ell-1}_{i=1}f_i\left(u_i-a_{ij_0}a^{-1}_{\ell j_0}u_{\ell}\right)=0.$$
Note that $u_i-a_{ij_0}a^{-1}_{\ell j_0}u_{\ell}$ has linearly independent images in $M/\mathfrak{m}M$ over $k$, and thus, by the induction hypothesis, $f_1=\cdots=f_{\ell-1}=0$, and therefore $f_{\ell}=0$.
\end{proof}

\begin{lemma}[{cf. \cite[Lemma 01ZM]{Stacks}}] \label{model}
Let $S\coloneqq \lim\limits_{\substack{\longleftarrow \\ i\in I}}S_i$ be a limit of schemes such that the transition morphisms are affine and each $S_i$ is quasi-compact and quasi-separated. Let $J$ be a finite category, and let $\mathcal{D}\colon J\to \mathsf{Sch}^{\mathrm{fp}}_{/S}$ be a finite diagram of schemes of finite presentation over $S$. Then there exist an index $i_0\in I$ and a finite diagram $\mathcal{D}_{i_0}\colon J\to \mathsf{Sch}^{\mathrm{fp}}_{/S_{i_0}}$ such that $\mathcal{D}_{i_0}\times_{S_{i_0}} S\cong \mathcal{D}$.
\end{lemma}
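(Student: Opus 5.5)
This is the standard ``limit'' (absolute noetherian approximation) result, and I would deduce it directly from \cite[Lemma 01ZM]{Stacks} and its companion statements in the Stacks Project. The plan is to descend the objects, then the morphisms, and then pass to a common index.

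\textbf{Objects.} Because $J$ is finite, the diagram $\mathcal{D}$ consists of finitely many schemes $X_j:=\mathcal{D}(j)$, each of finite presentation over $S$. For each $j\in J$, the cited lemma gives an index $i_j$ and a scheme $X_{j,i_j}$ of finite presentation over $S_{i_j}$ with $X_{j,i_j}\times_{S_{i_j}}S\cong X_j$. Since $I$ is directed and $J$ has finitely many objects, I can choose an index $i_1$ above all the $i_j$. I then base change each $X_{j,i_j}$ to $S_{i_1}$. This keeps finite presentation, and the isomorphisms with $X_j$ are preserved.

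\textbf{Morphisms.} For each of the finitely many morphisms $\varphi\colon X_j\to X_{j'}$ in the image of $\mathcal{D}$, I use the formula
\[
\operatorname{Mor}_S(X_{j,i}\times_{S_i}S,\;X_{j',i}\times_{S_i}S)=\varinjlim_{i'\ge i}\operatorname{Mor}_{S_{i'}}(X_{j,i'},X_{j',i'}).
\]
This holds because $X_{j',i}$ is locally of finite presentation and $X_{j,i}$ is quasi-compact and quasi-separated. It is \cite[Lemma 01ZC]{Stacks}, which is part of the same circle of results. The formula lets me descend each $\varphi$ to some morphism $\varphi_{i'}$ over some $S_{i'}$. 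I then take $i_2$ dominating all of these finitely many indices.

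\textbf{Relations.} The diagram $\mathcal{D}$ is a functor, so it satisfies the identities $\mathcal{D}(\mathrm{id})=\mathrm{id}$ and $\mathcal{D}(g\circ h)=\mathcal{D}(g)\circ\mathcal{D}(h)$. These are finitely many equalities among morphisms, because $J$ is finite. Each equality holds after base change to $S$, so it holds in the colimit. By the injectivity part of the same colimit formula, each equality therefore already holds over some larger index. I choose $i_0\ge i_2$ at which all of them hold. At that index the descended data form a functor $\mathcal{D}_{i_0}\colon J\to\mathsf{Sch}^{\mathrm{fp}}_{/S_{i_0}}$, and by construction $\mathcal{D}_{i_0}\times_{S_{i_0}}S\cong\mathcal{D}$.

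\textbf{Main obstacle.} The one delicate point is the identification of the descended objects with the $X_j$. Fixing the isomorphisms $X_{j,i}\times_{S_i}S\cong X_j$ once and for all lets me transport morphisms consistently through them. The rest is bookkeeping over finitely many indices in a directed set, and it is for exactly this reason that finiteness of $J$ is needed.
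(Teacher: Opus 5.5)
Your proposal is correct and is essentially the paper's argument. The paper descends the objects, the morphisms, and the commutativity relations of $\mathcal{D}$ using parts (1), (2), and (3) of \cite[Lemma 01ZM]{Stacks}, respectively, exactly as you do; the only difference is that the surjectivity and injectivity of the colimit map on morphism sets, which you take from the formula you attribute to 01ZC, are precisely the content of parts (2) and (3) of Lemma 01ZM, so you could cite those directly.
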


\begin{proof}
Objects and morphisms descend by \cite[Lemma 01ZM (1), (2)]{Stacks}, and the descent of a commutative diagram in $\mathcal{D}$ can be proved by \cite[Lemma 01ZM (3)]{Stacks}.
\end{proof}

\subsection{Filtration}

Let \(X\) be a projective variety over a field \(k\), and let \(L\) be a big Cartier divisor on \(X\). Let \(R_m\coloneqq H^0(X,\mathcal O_X(mL))\) and
\[
R(X,L)\coloneqq \bigoplus_{m\in\mathbb Z_{\ge 0}}R_m.
\]
By a \emph{filtration} \(\mathcal F\) on \(R(X,L)\), we mean a family of \(k\)-vector subspaces \(\mathcal F^\lambda R_m\subseteq R_m\) for \(m\in\mathbb Z_{\geq 0}\) and \(\lambda\in\mathbb R\), satisfying the following conditions.
\begin{enumerate}
\item If \(\lambda\ge \lambda'\), then \(\mathcal F^\lambda R_m\subseteq \mathcal F^{\lambda'}R_m\).
\item For every \(\lambda\in\mathbb R\),
\[
\mathcal F^\lambda R_m
=
\bigcap_{\lambda'<\lambda}\mathcal F^{\lambda'}R_m.
\]
\item We have \(\mathcal F^\lambda R_m=R_m\) for \(\lambda\ll 0\), and \(\mathcal F^\lambda R_m=0\) for \(\lambda\gg 0\).
\item For all \(m,n\in\mathbb Z_{\ge0}\) and \(\lambda,\lambda'\in\mathbb R\), \(\mathcal F^\lambda R_m\cdot \mathcal F^{\lambda'}R_n \subseteq \mathcal F^{\lambda+\lambda'}R_{m+n}\).
\end{enumerate}

\subsection{Valuation}
In this subsection, we recall the notion of \emph{(quasi-monomial) valuation}. For more details, see \cite{JM12} or \cite{Xu25}.

\smallskip

Let $X$ be a variety. A (real) \emph{valuation} on $K(X)$ is a map $\nu\colon K(X)^{\times}\to \R$ such that
\begin{itemize}
    \item $\nu(fg)=\nu(f)+\nu(g)$,
    \item $\nu(f+g)\ge \min\{\nu(f),\nu(g)\}$, and
    \item $\nu(a)=0$ for every $a\in k^{\times}$.
\end{itemize}
As usual, one extends $\nu$ by setting $\nu(0)=\infty$. To such a valuation, one associates its valuation ring
$$ \mathcal{O}_{\nu}\coloneqq \{f\in K(X)\mid \nu(f)\ge 0\}, $$
its maximal ideal $\mathfrak{m}_{\nu}\coloneqq \{f\in K(X)\mid \nu(f)>0\}$, its residue field $k(\nu)\coloneqq \mathcal{O}_{\nu}/\mathfrak{m}_{\nu}$, its value group $\Gamma_{\nu}\coloneqq \nu(K(X)^{\times})\subseteq \R$, and its rational rank
$$ \mathrm{rank}\,\nu\coloneqq \dim_{\Q}(\Gamma_{\nu}\otimes_{\Z}\Q).$$

\smallskip

A valuation $\nu$ is said to be \emph{over $X$} if it admits a center on $X$, namely, if there exists an affine open set $U=\Spec R\subseteq X$ such that $R\subseteq \mathcal{O}_{\nu}$. In that case, the prime ideal $R\cap \mathfrak{m}_{\nu}$ determines a unique point $c_X(\nu)\in X$, called the \emph{center} of $\nu$ on $X$. We denote by $\mathrm{Val}_X$ the space of valuations over $X$, and by $\mathrm{Val}^*_X\subseteq \mathrm{Val}_X$ the subspace of nontrivial valuations. We endow $\mathrm{Val}_X$ with the topology in which $\nu\mapsto \nu(f)$ is continuous for every $f\in \mathcal{O}_X$.

\smallskip

Let $f\colon X'\to X$ be a log resolution, and let $E_1,\cdots,E_r$ be divisors on $X'$ with simple normal crossings and nonempty intersection. Fix an irreducible component $C$ of $\bigcap^r_{i=1}E_i$. Let $\eta$ be its generic point, and choose local equations $y_i$ for $E_i$ in $\mathcal{O}_{X',\eta}$. For any weight vector $\alpha=(\alpha_1,\cdots,\alpha_r)\in \R^r_{\ge 0}$, we define a valuation $\nu_{\alpha,\eta}$ by declaring that if
$$ f=\sum_{\beta\in \Z^r_{\ge 0}}c_{\beta}y^{\beta}$$
is the expansion of $f$ around $\eta$ (cf. \cite[Theorem 032A]{Stacks}), then
$$ \nu_{\alpha,\eta}(f)\coloneqq \min\left\{\langle \alpha,\beta\rangle\coloneqq \sum^r_{i=1}\alpha_i\beta_i \Bigm| c_{\beta}(\eta)\ne 0\right\}. $$
Such valuations are called \emph{quasi-monomial valuations}. For fixed $(X',E)$ and $\eta$, they form a simplicial cone $\mathrm{QM}_{\eta}(X',E)\subseteq \mathrm{Val}_X$; taking the union over all strata yields the space $\mathrm{QM}(X',E)$ of toroidal valuations over the log smooth pair $(X',E)$.

\smallskip

Suppose $\left(X',E=\sum E_i\right)$ is a log smooth model over $X$, and let $\nu\in \mathrm{Val}_X$. If $z_i$ denotes a local defining equation of $E_i$ near $c_{X'}(\nu)$, then setting $\alpha_i\coloneqq \nu(z_i)$ gives a quasi-monomial valuation
$$ \rho_{X',E}(\nu)\coloneqq \nu_{\alpha,\eta}\in \mathrm{QM}(X',E), $$
called the \emph{retraction} of $\nu$ to the cone $\mathrm{QM}(X',E)$.

\smallskip

Assume now that $(X,\Delta)$ is a log canonical pair. Then, the log discrepancy function
$$ A_{X,\Delta}\colon \mathrm{Val}_X\to [0,\infty] $$
is defined in three stages. First, for a prime divisor $E$ over $X$, with $E\subseteq X'$ on some birational model $f\colon X'\to X$, one sets
$$ A_{X,\Delta}(c\cdot \mathrm{ord}_E)\coloneqq c\cdot A_{X,\Delta}(E). $$
Second, if $\nu_{\alpha,\eta}\in \mathrm{QM}_{\eta}(X',E)$ is a quasi-monomial valuation with respect to a log smooth model whose boundary components are $E_i$, then one defines
$$ A_{X,\Delta}(\nu_{\alpha})=\sum_i \alpha_iA_{X,\Delta}(E_i). $$
Finally, for a general valuation $\nu\in \mathrm{Val}_X$, the definition is obtained by taking the supremum over all quasi-monomial retractions:
$$ A_{X,\Delta}(\nu)=\sup_{(X',E)}A_{X,\Delta}(\rho_{X',E}(\nu)),$$
where $(X',E)$ ranges over log smooth models over $X$ on which $\nu$ has a center.

\subsection{{The $\alpha$- and $\delta$-invariants}}\label{subsect:thresholds}

The aim of this subsection is to define the $\alpha$- and $\delta$-invariants. For more details, see \cite[Chapters 3 and 4]{Xu25}.

Let \((X,\Delta)\) be a projective klt pair over an algebraically closed field \(k\), and let \(L\) be a big \(\mathbb Q\)-Cartier \(\mathbb Q\)-divisor on \(X\).
Let \(I_L\coloneqq \{m\in \mathbb Z_{>0}\mid mL \text{ is Cartier}\}\) and \(I_L^\circ\coloneqq \{m\in I_L\mid H^0(X,\mathcal O_X(mL))\ne 0\}\). For \(m\in I_L^\circ\), let $V_m\coloneqq H^0(X,\mathcal{O}_X(mL))$ and \(N_m\coloneqq \dim_k V_m\).

If \(b=(s_1,\ldots,s_{N_m})\) is a \(k\)-basis of \(V_m\), the associated \(m\)-basis type divisor is
\[
D_{m,b}\coloneqq
\frac{1}{mN_m}\sum_{i=1}^{N_m}\{s_i=0\}
\sim_{\mathbb Q} L.
\]
We define \(\delta_m(X,\Delta,L) \coloneqq \inf_b \operatorname{lct}(X,\Delta;D_{m,b})\), where \(b\) ranges over all \(k\)-bases of \(V_m\), and \(\delta(X,\Delta,L) \coloneqq \inf_{m\in I_L^\circ}\delta_m(X,\Delta,L)\).
Equivalently, we have 
\[
\delta(X,\Delta,L)
=
\lim_{\substack{m\in I_L^\circ\\ m\to\infty}}\delta_m(X,\Delta,L).
\]

Let \(\nu\in \operatorname{Val}_X^*\) be a valuation with \(A_{X,\Delta}(\nu)<\infty\).
It induces a filtration
\[
\mathcal F_\nu^\lambda V_m
\coloneqq
\{s\in V_m\mid \nu(s)\ge \lambda\}.
\]
For \(1\le j\le N_m\), define the \(j\)-th vanishing number by
\[
a_{m,j}(\nu,L)
\coloneqq
\sup\{\lambda\in\mathbb R\mid
\dim_k\mathcal F_{\nu}^\lambda V_m\ge j\}.
\]
We set
\[
S_m(\nu,L)
\coloneqq
\frac{1}{mN_m}\sum_{j=1}^{N_m}a_{m,j}(\nu,L),
\qquad
T_m(\nu,L)
\coloneqq
\frac{1}{m}a_{m,1}(\nu,L).
\]
Equivalently,
\[
S_m(\nu,L)
=
\frac{1}{mN_m}
\sum_{\lambda\in\mathbb R}
\lambda\cdot \dim\operatorname{gr}_{\mathcal F_{\nu}}^\lambda V_m.
\]
The asymptotic invariants are defined as
\[
S(\nu,L)
\coloneqq
\lim_{\substack{m\in I_L^\circ\\ m\to\infty}}S_m(\nu,L),
\qquad
T(\nu,L)
\coloneqq
\sup_{m\in I_L^\circ}T_m(\nu,L).
\]
Then we have 
\[
\delta(X,\Delta,L)
=
\inf_{\substack{\nu\in\operatorname{Val}_X^*\\ A_{X,\Delta}(\nu)<\infty}}
\frac{A_{X,\Delta}(\nu)}{S(\nu,L)}.
\]

Similarly, we define \(\alpha(X,\Delta,L) \coloneqq \inf\{\lct(X,\Delta,D)\mid D\ge 0,\ D\sim_{\mathbb Q} L\}\).
Then
\[
\alpha(X,\Delta,L)
=
\inf_{\substack{\nu\in\operatorname{Val}_X^*\\ A_{X,\Delta}(\nu)<\infty}}
\frac{A_{X,\Delta}(\nu)}{T(\nu,L)}.
\]

\begin{lemma}\label{lem:one-sided-base-change}
Let \((X,\Delta)\) be a projective klt pair over an algebraically closed field \(k\), and let \(K/k\) be an algebraically closed field extension.
Let \(L\) be a big \(\mathbb Q\)-Cartier \(\mathbb Q\)-divisor on \(X\).
Then the following inequalities hold:
\[
\alpha(X_K,\Delta_K,L_K)\le \alpha(X,\Delta,L),
\qquad
\delta(X_K,\Delta_K,L_K)\le \delta(X,\Delta,L).
\]
\end{lemma}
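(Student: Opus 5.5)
The plan is to work directly with the definitions via divisors and bases rather than valuations, since base change of divisors and of bases is transparent.

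\textbf{The $\alpha$-invariant.} Let $D\ge 0$ be an effective $\Q$-divisor on $X$ with $D\sim_{\Q}L$. Then $D_K$ is effective and $D_K\sim_{\Q}L_K$ on $X_K$, so $\alpha(X_K,\Delta_K,L_K)\le \lct(X_K,\Delta_K;D_K)$. It therefore suffices to show
$$\lct(X_K,\Delta_K;D_K)=\lct(X,\Delta;D).$$
For this, take a log resolution $f\colon Y\to X$ of $(X,\Delta+D)$. Its base change $f_K\colon Y_K\to X_K$ is again a log resolution of $(X_K,\Delta_K+D_K)$, because:
\begin{itemize}
\item smoothness and the snc property are preserved under field extension;
\item prime divisors on $Y$ stay geometrically irreducible, since $k$ is algebraically closed;
\item the coefficients of $K_{Y}-f^*(K_X+\Delta+tD)$ are unchanged.
\end{itemize}
The log canonical threshold is read off from these coefficients on a log resolution, so it is unchanged. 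Taking the infimum over $D$ gives the $\alpha$ inequality.

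\textbf{The $\delta$-invariant.} Fix $m\in I_L^\circ$. By flat base change,
$$H^0(X_K,\mathcal{O}(mL_K))=V_m\otimes_k K,$$
so $I^\circ_{L_K}=I^\circ_L$, the dimensions $N_m$ agree, and any $k$-basis $b$ of $V_m$ gives a $K$-basis $b_K$ of $V_m\otimes_k K$ with $D_{m,b_K}=(D_{m,b})_K$. By the lct invariance above,
$$\delta_m(X_K,\Delta_K,L_K)\le \lct(X_K,\Delta_K;D_{m,b_K})=\lct(X,\Delta;D_{m,b}).$$
Taking the infimum over $b$ gives $\delta_m(X_K,\Delta_K,L_K)\le\delta_m(X,\Delta,L)$. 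Taking the infimum over $m$, which is the definition of $\delta$, gives the $\delta$ inequality.

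\textbf{Main obstacle.} The only nontrivial point is the invariance of lct under algebraically closed base field extension. Its one real ingredient is that a log resolution base-changes to a log resolution. Birationality and properness are preserved; snc-ness needs the geometric irreducibility of the strata, which holds because $k$ is algebraically closed. The rest is bookkeeping. No reverse inequality is needed, since $K$-bases not defined over $k$ only enlarge the set over which the infimum is taken; this is why the statement is one-sided.
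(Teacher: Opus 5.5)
Your proposal is correct and follows the same route as the paper: base-change each $m$-basis type divisor (respectively each effective $D\sim_{\Q}L$) to $X_K$, use that the log canonical threshold is unchanged, and take infima over bases and over $m$ (respectively over $D$). The only difference is that you prove the invariance $\lct(X_K,\Delta_K;D_K)=\lct(X,\Delta;D)$ by base-changing a log resolution, whereas the paper simply cites \cite[Lemma 2.14 iii)]{dFEM11} for it.
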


\begin{proof}
We first prove the assertion for \(\delta\).
Fix \(m\in I_L^\circ\), and let \(D_{m,b}\) be an arbitrary \(m\)-basis type divisor of \(L\) over \(k\).
Then \((D_{m,b})_K\) is an \(m\)-basis type divisor of \(L_K\).
Moreover, for this fixed divisor, we have \(\lct(X_K,\Delta_K,(D_{m,b})_K) = \lct(X,\Delta,D_{m,b})\) (cf. \cite[Lemma 2.14 iii)]{dFEM11}).
Hence,
\[
\delta_m(X_K,\Delta_K,L_K)
\leq
\lct(X,\Delta;D_{m,b}).
\]
Taking the infimum over all \(m\)-basis type divisors \(D_{m,b}\) over \(k\), we obtain
\[
\delta_m(X_K,\Delta_K,L_K)\le \delta_m(X,\Delta,L).
\]
Taking the infimum over \(m\in I_L^\circ\), we obtain \(\delta(X_K,\Delta_K,L_K)\le \delta(X,\Delta,L)\).

The proof for \(\alpha\) is analogous.
Let \(D\ge 0\) be a \(\mathbb Q\)-divisor with \(D\sim_{\mathbb Q}L\).
Then \(D_K\ge 0\), \(D_K\sim_{\mathbb Q}L_K\), and \(\lct(X_K,\Delta_K,D_K) = \lct(X,\Delta,D)\). Taking the infimum over such \(D\) gives \(\alpha(X_K,\Delta_K,L_K)\le \alpha(X,\Delta,L)\).
\end{proof}

\begin{proof}
Every object and morphism in $\mathcal{D}$ can be descended to some $i_0$ by \cite[Lemma 01ZM (1), (2)]{Stacks}, and the commutative diagrams in $\mathcal{D}$ descend by \cite[Lemma 01ZM (3)]{Stacks}.
\end{proof}

\section{Proof of the main theorem} \label{3}
The goal of this section is to prove Theorem \ref{thm-main}. In this section, we always assume that $k$ is algebraically closed. We first prove a weak upper semicontinuity for the $S$- and $T$-invariants, similar to \cite[Theorem 6.6]{BL22}. We follow the structure of the argument in \cite[Lemma 2.5]{BLX22}.

\begin{lemma}[{cf. \cite[Lemma 2.5]{BLX22} and \cite[Lemma 1.31]{Xu25}}] \label{dvr}
Let $(X,\Delta)$ be a projective klt pair over $k$, and let $L$ be a big $\Q$-Cartier $\Q$-divisor on $X$.

\begin{itemize}
    \item Let $R$ be a regular local ring with residue field $k$ and $k\subseteq R$, and let $F$ be an algebraic closure of the fraction field of $R$,
    \item let $f\colon (X',E\coloneqq \sum^r_{i=1}E_i)\to (X_R,\Delta_R)$ be a fiberwise log resolution over $R$,
    \item let $\alpha=(\alpha_1,\cdots,\alpha_r)\in \R^r_{\ge 0}$, and
    \item let $\eta_F\in \bigcap^r_{i=1}E_{iF}$ be the generic point of an irreducible component of $\bigcap^r_{i=1}E_{iF}$, and let $\eta_k$ be the generic point of $\bigcap^r_{i=1}E_{ik}$ corresponding to the chosen irreducible component.
\end{itemize}

Then,
\begin{itemize}
    \item[\emph{(a)}] $A_{X,\Delta}(\nu_{\alpha,\eta_k})=A_{X_F,\Delta_F}(\nu_{\alpha,\eta_F})$,
    \item[\emph{(b)}] $S_m(\nu_{\alpha,\eta_k},L)\ge S_m(\nu_{\alpha,\eta_F},L_F)$ and $T_m(\nu_{\alpha,\eta_k},L)\ge T_m(\nu_{\alpha,\eta_F},L_F)$,
    \item[\emph{(c)}] $S(\nu_{\alpha,\eta_k},L)\ge S(\nu_{\alpha,\eta_F},L_F)$ and $T(\nu_{\alpha,\eta_k},L)\ge T(\nu_{\alpha,\eta_F},L_F)$.
\end{itemize}
\end{lemma}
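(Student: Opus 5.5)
Write $\nu_k\coloneqq\nu_{\alpha,\eta_k}$ and $\nu_F\coloneqq\nu_{\alpha,\eta_F}$. The plan is to prove (a) by a discrepancy computation that is uniform over $R$. For (b) we compare the filtrations induced by $\nu_k$ and $\nu_F$ on sections over $R$, using the formal structure from Lemma \ref{cutest}. Part (c) then follows from (b) by taking limits or suprema in $m$.

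\textbf{Part (a).} Since $(X',E)\to(X_R,\Delta_R)$ is a fiberwise log resolution, we have $K_{X'/R}-f^*(K_{X_R/R}+\Delta_R)=\sum_i(A_i-1)E_i+(\text{exceptional terms})$. Forming the relative canonical divisor commutes with base change to the fibers: the base change is flat, $X'\to \Spec R$ is smooth near $E$ by Remark \ref{nojeokbong}, and $X_R=X\times_k \Spec R$. Hence $A_{X,\Delta}(E_{ik})=A_{X_F,\Delta_F}(E_{iF})=\mathrm{mult}_{E_i}(\cdots)+1$. Every $E_i$ is geometrically irreducible, so both components are the same number $A_i$. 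The formula $A(\nu_\alpha)=\sum\alpha_iA(E_i)$ then gives (a).

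\textbf{Part (b).} Fix $m$ with $mL$ Cartier, and set $V_m\coloneqq H^0(X,\mathcal O_X(mL))$. Then $H^0(X_R,mL_R)=V_m\otimes_k R$ and $H^0(X_F,mL_F)=V_m\otimes_k F$, by flat base change. Let $W$ be the stratum of $\bigcap E_i$ containing the chosen components, with generic point $\eta$ and local equations $y_1,\dots,y_r$. Let $A\coloneqq\mathcal O_{X',\eta}$ and $I\coloneqq(y_1,\dots,y_r)$, so that $B=A/I=\mathcal O_{W,\eta}$. By Remark \ref{nojeokbong}, $B$ is formally smooth over $R$ (after localizing, smoothness of $W/R$ suffices), and the $y_i$ form a regular sequence. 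Lemma \ref{cutest} therefore gives $\widehat A\cong B[[T_1,\dots,T_r]]$, so each section $s$ has an expansion $s=\sum_\beta c_\beta(s)y^\beta$ with $c_\beta(s)\in B$.

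This expansion specializes compatibly to the closed fiber and to the geometric generic fiber. Consequently $\nu_k(s|_{X})=\min\{\langle\alpha,\beta\rangle: c_\beta(s)\notin\mathfrak m_R B\}$ (after restricting to $\eta_k$), while $\nu_F(s_F)=\min\{\langle\alpha,\beta\rangle : c_\beta(s)\neq0\}$. Since $\nu_F$ only sees the $F$-span, what we want is the inequality $\dim_k\mathcal F^\lambda_{\nu_k}V_m\ge\dim_F\mathcal F^\lambda_{\nu_F}(V_m\otimes F)$ for all $\lambda$. Summing over $\lambda$ then yields $S_m(\nu_k)\ge S_m(\nu_F)$, and taking the top jump yields $T_m(\nu_k)\ge T_m(\nu_F)$.

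To prove the dimension inequality, consider the $R$-submodule $M^\lambda\coloneqq\{s\in V_m\otimes R:\nu_{\alpha}(s)\ge\lambda \text{ generically}\}$. It is saturated, because the condition is that certain coefficients vanish. Since $\mathcal F^{\lambda}$ is defined by $F$-linear conditions that descend to $\mathrm{Frac}(R)$ (the expansion lives over $B$), $M^\lambda\otimes \mathrm{Frac}R$ recovers $\mathcal F^\lambda_{\nu_F}$ up to $\otimes F$. The reduction of $M^\lambda$ mod $\mathfrak m_R$ lands in $\mathcal F^\lambda_{\nu_k}V_m$, since vanishing coefficients stay vanishing. The key point, where Lemma \ref{elementary} enters, is that this reduction has dimension at least the generic rank. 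We choose $u_1,\dots,u_\ell\in M^\lambda$ whose images mod $\mathfrak m$ form a basis of $M^\lambda\otimes k$'s image. Saturation (equivalently, $V_m\otimes R/M^\lambda$ being torsion-free, hence flat when $R$ is a DVR, and in general handled by induction on $\dim R$ by cutting with a regular parameter) shows that these $u_i$ generate $M^\lambda$ by Nakayama. Lemma \ref{elementary} shows they are independent, so the generic rank equals $\ell$.

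\textbf{Main obstacle and part (c).} I expect the main obstacle to be this semicontinuity step for a higher-dimensional regular $R$, where torsion-free does not imply flat. There I would first reduce to $R$ a DVR, by choosing a regular system of parameters and arguing one parameter at a time; the regularity of $R$ keeps each quotient regular and the fiberwise log resolution hypotheses preserved. Finally, (c) follows from (b): take $\lim_m$ for $S$ and $\sup_m$ for $T$ over $m\in I_L^\circ$. These index sets agree for $L$ and $L_F$, because $h^0$ is invariant under field extension.
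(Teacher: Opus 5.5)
You handle (a) and the passage from (b) to (c) as the paper does. The core of (b), however, has a genuine gap when $\dim R\ge 2$, and that case is needed: Lemma \ref{lem:descent} applies this lemma with $R=\mathcal{O}_{U,s}$, where $\dim U=\operatorname{trdeg}(K/k)$.

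You argue on the kernel side. You claim that lifts $u_i$ of a basis of the image of $M^\lambda$ in $V_m$ generate $M^\lambda$ by Nakayama, so that this image has dimension at least $\operatorname{rank}M^\lambda$. That needs $M^\lambda\cap\mathfrak{m}_R(V_m\otimes R)=\mathfrak{m}_RM^\lambda$, i.e.\ that $M^\lambda$ is a direct summand. Saturation gives this over a DVR, but not in general, and no induction on $\dim R$ can make the $u_i$ generate $M^\lambda$.

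Here is a geometric counterexample. Take $R=k[a,b]_{(a,b)}$, $X=\mathbb{P}^2$, $m=2$ and $3<\lambda\le 4$. Let $\nu$ be the monomial valuation with weights $(1,4)$ in the coordinates $(x,\,y-ax^2-bx^3)$. It is $\mathrm{ord}$ of the last exceptional divisor of four successive blow-ups of sections of $\mathbb{P}^2_R$, and these blow-ups form a fiberwise log resolution. Write $c_{ij}$ for the coefficient of $x^iy^j$. The defining conditions include $b\,c_{01}+a\,c_{11}=0$. Hence $M^\lambda=R\,(ay-a^2x^2-bxy)\oplus R\,y^2$ has rank $2$ but only a $1$-dimensional image mod $\mathfrak{m}_R$. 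The special filtration is $\langle y,xy,y^2\rangle$, so the inequality itself holds, just not by your mechanism.

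Your fallback, reducing to DVRs ``one parameter at a time'', is only sketched. The intermediate DVRs $(R/(t_1,\dots,t_i))_{(t_{i+1})}$ have residue fields that are transcendental over $k$ and not algebraically closed. So the lemma as stated does not apply to them. You would have to redo the DVR case over arbitrary residue fields and add a base-field invariance step like Lemma \ref{unc}.

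The missing idea is to argue on the image side, as the paper does. In the free module $V_m\otimes R$, Lemma \ref{elementary} buys nothing, since flatness there is automatic. Apply it instead to the coefficient vectors $u_j=(c_\beta(e_j))_{\beta\in B_\lambda}$ of an $R$-basis $(e_j)$, viewed in $\mathcal{O}_{W,\eta_k}^{|B_\lambda|}$. This module is $R$-flat because $W$ is smooth over $R$, and this is where Remark \ref{nojeokbong} genuinely enters. Then $k$-independence of the reductions $u_{jk}$ gives $R$-independence of the $u_j$, hence $F$-independence of the $u_{jF}$. So the $k$-rank of the coefficient map on the special fiber is at most its $F$-rank on the geometric generic fiber, for every regular local $R$ at once.

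Alternatively, your kernel argument does work after base change to a single DVR $D$ dominating $R$, e.g.\ the valuation ring of $\mathrm{ord}_{\mathfrak{m}_R}$. We have $\operatorname{Frac}D=\operatorname{Frac}R$, and $R\to\kappa_D$ factors through $k$. So the special-fiber kernel over $\kappa_D$ is the base change of the one over $k$, and has the same dimension.

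A smaller point: the expansion must be taken at $\eta_k$, with $B=\mathcal{O}_{W,\eta_k}$, so that $B/\mathfrak{m}_RB=k(\eta_k)$. If $\eta$ is the generic point of $W$, then $B$ is a field containing $\operatorname{Frac}(R)$ and $\mathfrak{m}_RB=B$. Your criterion for $\nu_k$ then says nothing.
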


\begin{proof}
Let $\left(X'_F,E_F\coloneqq \sum^r_{i=1}E_{iF}\right)\to (X_F,\Delta_F)$ be the geometric generic fiber of $f$, and let $\left(X'_k,E_k\coloneqq \sum^r_{i=1}E_{ik}\right)\to (X,\Delta)$ be the special fiber. Since $f$ is a fiberwise log resolution over $R$, for each $i$, we have $A_{X,\Delta}(E_{ik})=A_{X_F,\Delta_F}(E_{iF})$. Thus, by the definition of the log discrepancy of a quasi-monomial valuation, we obtain (a).

\smallskip

We prove (b) and (c) for $S$; the proof for $T$ is almost identical. We may assume $1\in I^{\circ}_L$. Fix $m\ge 1$ and a real $\lambda$, and define $B_{\lambda}\coloneqq \{\beta\in \Z^r_{\ge 0}\mid \alpha\cdot \beta<\lambda\}$. We may assume that $\alpha_i>0$ for every $i$; once the lemma is proved in the case where $\alpha_i>0$ for every $i$, then by the continuity of $S_m$ and $S$ in $\mathrm{QM}(X'_k,E_k)$ and $\mathrm{QM}(X'_{F},E_F)$ (cf. \cite[Proof of Proposition 2.4]{BLX22}), the case in which some $\alpha_i$ is zero follows. Under this assumption, $B_{\lambda}$ is finite. 

\smallskip

Let $s\in H^0(X_R,mL_R)$ be a section. Let $W$ be the irreducible component of $\bigcap^r_{i=1}E_{iR}$ containing $\eta_k$. Let $y_1,\cdots,y_r$ be local equations of $E_1,\cdots,E_r$ at $\eta_k$. Then we have an expression
$$ f^*s=\sum_{\beta\in \Z^r_{\ge 0}}c_{\beta}(s)y^{\beta},\,\,\,\,\, c_{\beta}(s)\in \mathcal{O}_{W,\eta_k},$$
where $y^{\beta}\coloneqq y^{\beta_1}_1\cdots y^{\beta_r}_r$ (cf. Lemma \ref{cutest}). Note that $W$ is smooth over $R$ by Remark \ref{nojeokbong}, and thus $\mathcal{O}_{W,\eta_k}$ is formally smooth over $R$. For every $s'\in H^0(X_F,mL_F)=H^0(X_R,mL_R)\otimes_R F,$
define
$$ c_{\beta F}(s')\coloneqq \sum_j c_{\beta}(s_j)\otimes a_j$$
if $s'=\sum s_j\otimes a_j$ with $a_j\in F$ and $s_j\in H^0(X_R,mL_R)$. For simplicity, we write $s_j\otimes a_j$ as $a_js_j$. The definition does not depend on the choice of the expression of $s'$. Indeed, $c_{\beta F}(s')$ is the coefficient of the expression of $(f\otimes 1)^*(s')$ in $\widehat{\mathcal{O}_{X'_F,\eta_F}}$. Similarly, we define $c_{\beta k}(s'')$ for $s''\in H^0(X,mL)=H^0(X_R,mL_R)\otimes_R k$.

\smallskip

By definition of $\nu_{\alpha,\eta_F}(s)$,
\begin{equation} \label{gf}
    \begin{aligned}\nu_{\alpha,\eta_F}(s_F)\ge \lambda\text{ is equivalent to }&c_{\beta F}(s)=0\text{ for every }\beta\in B_{\lambda}
    \end{aligned}
\end{equation}
Similarly, $\nu_{\alpha,\eta_k}(s_k)\ge \lambda$ is equivalent to $c_{\beta k}(s)=0$ for every $\beta\in B_{\lambda}$.

\smallskip

Let $e_1,\cdots,e_N$ be an $R$-basis of $H^0(X_R,mL_R)$. We define $u_{j}\coloneqq (c_{\beta}(e_j))_{\beta\in B_{\lambda}}\in \mathcal{O}^{|B_{\lambda}|}_{W,\eta_k}$. We denote by $u_{jF}$, $u_{jk}$, $e_{jF}$, $e_{jk}$ the base changes of $u_j$ and $e_j$ to $F$ and $k$, respectively. Consider an $F$-linear map
$$ \varphi_F\colon H^0(X_F,mL_F)\to k(\eta_F)^{|B_{\lambda}|},\quad \sum_j a_je_{jF}\mapsto \sum_j a_j u_{jF},$$
and a $k$-linear map
$$ \varphi_k\colon H^0(X,mL)\to k(\eta_k)^{|B_{\lambda}|},\quad \sum_j a_je_{jk}\mapsto \sum_j a_j u_{jk}.$$
Then $\mathcal{F}^{\lambda}_{\nu_{\alpha,\eta_F}}H^0(X_F,mL_F)$ and $\mathcal{F}^{\lambda}_{\nu_{\alpha,\eta_k}}H^0(X,mL)$ are the kernels of $\varphi_F$ and $\varphi_k$, respectively. Indeed,
\begin{equation} \label{det}
\begin{aligned}
\sum_j a_j e_{jF}\in \mathrm{Ker}\,\varphi_F&\iff \left(\sum_j (c_{\beta}(e_j)\otimes a_j)\right)_{\beta\in B_{\lambda}}=0
\\ &\iff c_{\beta F}\left(\sum_ja_je_j\right)=0\text{ for every }\beta\in B_{\lambda}
\\ &\iff \nu_{\alpha,\eta_F}\left(\sum_j a_je_{jF}\right)\ge \lambda & (1)
\\ &\iff \sum_ja_je_{jF}\in \mathcal{F}^{\lambda}_{\nu_{\alpha,\eta_F}}H^0(X_F,mL_F),
\end{aligned}
\end{equation}
(where the implication (1) follows from (\ref{gf})) and the proof of the case of $\varphi_k$ is similar.

\smallskip

Note that
\begin{equation} \label{hyeongsan}
\mathrm{rank}_{k}(u_{1k},\cdots,u_{Nk})\le \mathrm{rank}_{F}(u_{1F},\cdots,u_{NF}).
\end{equation}

Indeed, $\mathcal{O}^{|B_{\lambda}|}_{W,\eta_k}$ is flat over $R$ (cf. Remark \ref{nojeokbong}). Therefore, if $u_{i_1k},\cdots,u_{i_{\ell}k}$ are $k$-linearly independent in $k(\eta_k)^{|B_{\lambda}|}$, then $u_{i_1},\cdots,u_{i_{\ell}}$ are $R$-linearly independent in $\mathcal{O}^{|B_{\lambda}|}_{W,\eta_k}$ (cf. Lemma \ref{elementary}). Moreover, by the fiberwise log resolution, $\mathcal{O}_{W,\eta_k}\otimes_R F$ is a domain with generic point $\eta_F$. We obtain an injection $\mathcal{O}^{|B_{\lambda}|}_{W,\eta_k}\otimes_R F\to k(\eta_F)^{|B_{\lambda}|}$, and $u_{i_1F},\cdots,u_{i_{\ell}F}$ are $F$-linearly independent in $k(\eta_F)^{|B_{\lambda}|}$. This proves (\ref{hyeongsan}).

\smallskip

Therefore,
$$ \dim_k\mathcal{F}^{\lambda}_{\nu_{\alpha,\eta_k}}H^0(X,mL)\ge \dim_F \mathcal{F}^{\lambda}_{\nu_{\alpha,\eta_F}}H^0(X_F,mL_F),$$
and this inequality implies (b). (c) follows directly from (b).
\end{proof}

We show that the $S$- and $T$-invariants are invariant under base change of a log smooth model.

\begin{lemma} \label{unc}
Let $(X,\Delta)$ be a projective klt pair over $k$, and let $K/k$ be an algebraically closed field extension. Let $L$ be a big $\Q$-Cartier $\Q$-divisor on $X$. Let $f\colon \left(X',E=\sum^r_{i=1}E_i\right)\to X$ be a log smooth model of $X$, and let $\omega\in \mathrm{QM}(X',E)$ be a quasi-monomial valuation. Let $\alpha\coloneqq (\alpha_1,\cdots,\alpha_r)\in \R^r_{\ge 0}$ be a weight vector defining $\omega$.

\smallskip

Then, $S(\omega,L)=S(\omega_K,L_K)$ and $T(\omega,L)=T(\omega_K,L_K)$.
\end{lemma}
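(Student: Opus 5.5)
We will prove the stronger statement that for every $m\in I^\circ_L$ and every $\lambda\in\R$,
$$\dim_K \mathcal{F}^{\lambda}_{\omega_K}H^0(X_K,mL_K)=\dim_k \mathcal{F}^{\lambda}_{\omega}H^0(X,mL).$$
This gives $S_m(\omega,L)=S_m(\omega_K,L_K)$ and $T_m(\omega,L)=T_m(\omega_K,L_K)$ for all $m$. Taking the limit, respectively the supremum, over $m\in I_L^\circ$ then yields the lemma. Here $\omega_K$ denotes the quasi-monomial valuation $\nu_{\alpha,\eta_K}$ on the log smooth model $(X'_K,E_K)\to X_K$. Since $k$ is algebraically closed, every stratum of $E$ is geometrically irreducible, so $\eta_K$, the generic point of the base change of the stratum $C\ni\eta$, is well defined. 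Also $H^0(X_K,mL_K)=H^0(X,mL)\otimes_k K$ by flat base change.

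We follow the proof of Lemma~\ref{dvr}, in a simpler form since the base change is now flat over a field. As there, by continuity of $S_m$ and $T_m$ on the cone $\mathrm{QM}_\eta(X',E)$ we reduce to the case $\alpha_i>0$ for all $i$. Then $B_\lambda=\{\beta\mid \alpha\cdot\beta<\lambda\}$ is finite. Fix local equations $y_i$ of $E_i$ at $\eta$ and a section $\mathcal{O}_{C,\eta}\to\widehat{\mathcal{O}_{X',\eta}}$ as in Lemma~\ref{cutest}. Every $s\in H^0(X,mL)$ then has an expansion $f^*s=\sum c_\beta(s)y^\beta$. After base change to $K$, the same $y_i$ and the base-changed section give the expansion of $s\otimes a$ at $\eta_K$, with coefficients $c_\beta(s)\otimes a\in\mathcal{O}_{C,\eta}\otimes_k K\subseteq k(\eta_K)$.

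Choose a basis $e_1,\dots,e_N$ of $H^0(X,mL)$ and set $u_j=(c_\beta(e_j))_{\beta\in B_\lambda}\in k(\eta)^{|B_\lambda|}$. As in (\ref{det}), the filtered pieces are the kernels of the maps
$$\varphi_k\colon\textstyle\sum a_je_j\mapsto\sum a_ju_j \quad\text{and}\quad \varphi_K\colon\textstyle\sum a_je_{jK}\mapsto \sum a_ju_{jK}.$$
The map $\varphi_K$ factors as
$$\varphi_k\otimes_k K\colon H^0(X,mL)\otimes_k K\to k(\eta)^{|B_\lambda|}\otimes_k K,$$
followed by the natural map $k(\eta)\otimes_k K\to k(\eta_K)$. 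The latter map is injective: $k(\eta)\otimes_k K$ is a localization of $\mathcal{O}_{C,\eta}\otimes_k K$, which is a domain because $C$ is geometrically integral, and this domain embeds into its fraction field $k(\eta_K)$. Hence $\ker\varphi_K=(\ker\varphi_k)\otimes_k K$, since tensoring with a field is exact, and the dimensions agree.

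The main obstacle is justifying that the coefficient expansion commutes with base change. We need the completed local ring at $\eta_K$ to be compatible with $\widehat{\mathcal{O}_{X',\eta}}\otimes_k K$ under the chosen section, so that both $\nu_{\alpha,\eta_K}$ and $\omega$ can be read off from the same coefficients $c_\beta$. We plan to handle this as in Lemma~\ref{dvr}. The condition $\nu_{\alpha,\eta}(s)\ge\lambda$ only involves the finitely many $\beta\in B_\lambda$, and therefore only the truncation $\mathcal{O}_{X'}/(y)^n$ for $n$ large. We apply Lemma~\ref{cutest} over the base $k$. The quotients $\mathcal{O}_{X',\eta}/(y)^n$ commute with $\otimes_k K$ followed by localization at $\eta_K$. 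The base-changed section is a compatible section. By the uniqueness statement in Lemma~\ref{cutest}(b), the truncated expansions agree.
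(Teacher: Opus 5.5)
Your proposal is correct and follows the paper's proof. It uses the same reduction to $\alpha_i>0$, the same coefficient maps $\varphi_k,\varphi_K$ whose kernels are the filtered pieces, and the same conclusion from $\varphi_K=\varphi_k\otimes 1$. You also make explicit two points the paper leaves implicit: that the expansions at $\eta$ and $\eta_K$ agree under base change (the base-changed section extends from $k(\eta)\otimes_k K$ to $k(\eta_K)$ because nonzero elements become units in $\mathcal{O}_{X'_K,\eta_K}/(y)^n$), and that $k(\eta)\otimes_k K\to k(\eta_K)$ is injective, which is exactly what upgrades $\varphi_K=\varphi_k\otimes 1$ to $\ker\varphi_K=(\ker\varphi_k)\otimes_k K$.
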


\begin{proof}
We prove the assertion for $S$; as the proof for $T$ is almost verbatim to that of $S$. Let us assume $1\in I^{\circ}_L$. Let $\lambda$ be a real number, let $m$ be a sufficiently divisible positive integer, let $\eta\in \bigcap^r_{i=1}E_i$ be the generic point of the irreducible component defining $\omega$, and let $\eta_K\in \bigcap^r_{i=1}E_{iK}$ be the generic point of the irreducible component defining $\omega_K$. Define $B_{\lambda}\coloneqq \{\beta\in \Z^r_{\ge 0}\mid  \alpha\cdot \beta<\lambda\}$. As in the proof of Lemma \ref{dvr}, we may assume that $\alpha_i>0$ for every $i$. 

\smallskip

For every $s\in H^0(X,mL)$, we can represent $s$ as
$$ f^*s=\sum_{\beta}c_{\beta}(s)y^{\beta},\,\,\, c_{\beta}(s)\in k(\eta). $$
This expression is taken in $\widehat{\mathcal{O}_{X',\eta}}$. Let $e_1,\cdots,e_N$ be a basis of $H^0(X,mL)$, and let $u_j=(c_{\beta}(e_j))_{\beta\in B_{\lambda}}$. As in Lemma \ref{dvr}, we can define $\varphi_k\colon H^0(X,mL)\to k(\eta)^{|B_{\lambda}|}$ and $\varphi_K\colon H^0(X_K,mL_K)\to k(\eta_K)^{|B_{\lambda}|}$ as
$$ \varphi_k\left(\sum_j a_j e_j\right)=\sum_j a_j u_j,\quad \varphi_K\left(\sum_j a_j(e_j\otimes 1)\right)=\sum_j a_j (u_j\otimes 1). $$ Then, repeating the argument in (\ref{det}), we obtain that the kernel of $\varphi_k$ is $\mathcal{F}^{\lambda}_{\nu_{\alpha,\eta}}H^0(X,mL)$, and the kernel of $\varphi_K$ is $\mathcal{F}^{\lambda}_{\nu_{\alpha,\eta_K}}H^0(X_K,mL_K)$. Moreover, $\varphi_k\otimes 1=\varphi_K$, and thus we obtain the lemma.
\end{proof}

\begin{lemma}\label{lem:descent}
Let $(X,\Delta)$ be a projective klt pair over $k$, and let $L$ be a big $\Q$-Cartier $\Q$-divisor on $X$. Let $K/k$ be an algebraically closed field extension, and let $\omega$ be a quasi-monomial valuation over $X_K$.

\smallskip

Then, there exists a quasi-monomial valuation $\nu$ over $X$ such that
\begin{itemize}
    \item[\emph{(a)}] $A_{X,\Delta}(\nu)=A_{X_K,\Delta_K}(\omega)$, and
    \item[\emph{(b)}] $S(\nu,L)\ge S(\omega,L_K)$ and $T(\nu,L)\ge T(\omega,L_K)$.
\end{itemize}
\end{lemma}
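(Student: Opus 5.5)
The plan is to spread out the data defining $\omega$ over a finitely generated $k$-algebra and then specialize to a $k$-point using Lemma \ref{dvr}.

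First, since $\omega$ is quasi-monomial over $X_K$, there is a log smooth model $(Y,G=\sum_{i=1}^r G_i)\to (X_K,\Delta_K)$, a weight vector $\alpha\in\R^r_{\ge0}$ and the generic point $\zeta$ of a component of $\bigcap_i G_i$ with $\omega=\nu_{\alpha,\zeta}$. The field $K$ is the filtered union of its finitely generated $k$-subalgebras $A$. By Lemma \ref{model}, the finite diagram consisting of $Y\to X_K$, the $G_i$, the chosen stratum, and the maps to $X_K$ descends to a diagram $Y_A\to X_A$ over some $A$. After enlarging $A$ and shrinking $U=\Spec A$ (generic smoothness, generic flatness and constructibility of the relevant properties), we may assume the following: $U$ is smooth over $k$; $(X_A,\Delta_A)\to U$ is the trivial $\Q$-Gorenstein family of klt pairs; and $(Y_A,G_A)\to (X_A,\Delta_A)$ is a fiberwise log resolution over $U$. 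The last point includes geometric irreducibility of every stratum, which we arrange by adjoining to $A$ the finitely many elements needed to define the geometric components over $K$; this is possible since $K$ is algebraically closed. The chosen stratum then spreads out to a stratum $W_A$ that is smooth over $U$ by Remark \ref{nojeokbong}.

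Next, pick a closed point $u\in U$. Its residue field is $k$ because $k$ is algebraically closed. Set $R\coloneqq \mathcal O_{U,u}$, a regular local ring with residue field $k$ containing $k$. Base-changing the family to $\Spec R$ gives a fiberwise log resolution $f\colon(Y_R,G_R)\to(X_R,\Delta_R)$ over $R$. Let $F$ be the algebraic closure of $\operatorname{Frac}(R)=\operatorname{Frac}(A)$ inside $K$. Define $\nu\coloneqq\nu_{\alpha,\eta_k}$ on the special fiber $(Y_u,G_u)\to(X,\Delta)$, a quasi-monomial valuation over $X$. Let $\omega_F\coloneqq\nu_{\alpha,\eta_F}$ on the geometric generic fiber over $F$. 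Then Lemma \ref{dvr}(a),(c) gives
\[
A_{X,\Delta}(\nu)=A_{X_F,\Delta_F}(\omega_F),\quad S(\nu,L)\ge S(\omega_F,L_F),\quad T(\nu,L)\ge T(\omega_F,L_F).
\]

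Finally, we compare $\omega_F$ with $\omega$. The model $(Y,G)$ is the base change of $(Y_F,G_F)$ along the extension $F\subset K$ of algebraically closed fields, and $\omega=(\omega_F)_K$. Lemma \ref{unc} therefore gives $S(\omega_F,L_F)=S(\omega,L_K)$ and $T(\omega_F,L_F)=T(\omega,L_K)$. Since log discrepancies of the $G_i$ are preserved under this base change, we also get $A_{X_F,\Delta_F}(\omega_F)=A_{X_K,\Delta_K}(\omega)$. Combining these equalities with the previous step yields (a) and (b).

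The main obstacle is the spreading-out step. We must check that the descended model is a genuine fiberwise log resolution in the sense of the definition: the geometric fibers are log resolutions, and the strata, in particular the chosen component containing $\zeta$, are geometrically irreducible over $A$. We must also check that the component over the generic point corresponds to the component at $u$, which is exactly the hypothesis on $\eta_k$ and $\eta_F$ in Lemma \ref{dvr}. I would handle this by first passing to a finite extension of $\operatorname{Frac}(A)$ inside $K$ over which all strata split into geometrically irreducible components. Then I would shrink $U$ so that the strata are smooth with geometrically irreducible fibers; such fibers are connected, so each component has a well-defined specialization.
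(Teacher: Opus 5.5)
Your proposal is correct and follows essentially the same route as the paper. You spread out the log smooth model via Lemma \ref{model}, arrange a fiberwise log resolution over a smooth $k$-variety, specialize to a closed point with Lemma \ref{dvr}, and compare with $X_K$ via Lemma \ref{unc}. The only difference is the ordering: the paper first descends to an algebraically closed $F\subseteq K$ of finite transcendence degree (using Lemma \ref{unc} there) and then spreads out over $U$ with a stratification and an \'etale base change, whereas you spread out directly over a finitely generated $A\subset K$ (enlarging $A$ inside $K$ to make the strata geometrically irreducible) and apply Lemma \ref{unc} at the end with $F$ the algebraic closure of $\operatorname{Frac}(A)$ in $K$.
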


\begin{proof}
Let $\left(X',E=\sum^r_{i=1}E_i\right)\to (X_K,\Delta_K)$ be a log smooth model on which $\omega\in \mathrm{QM}(X',E)$. We may assume that for the generic point $\eta\in \bigcap^r_{i=1}E_i$ of an irreducible component of $\bigcap^r_{i=1}E_i$, and $\alpha\in \R^r_{\ge 0}$, we have $\omega=\nu_{\alpha,\eta}$. Recall the fact that
$$ K=\lim_{\substack{\longrightarrow \\ K/F/k}}F,$$
(where $F$ ranges over all algebraically closed intermediate fields with finite transcendence degree over $k$). By Lemma \ref{model}, there exists an intermediate algebraically closed field $F$, with $k\subseteq F\subseteq K$, such that $F/k$ has finite transcendence degree, and a log smooth model $\left(X'_F,E_F\coloneqq\sum^r_{i=1}E_{iF}\right)\to X_F$ whose base change to $K$ is $(X',E)\to X_K$. Moreover, $(X'_F,E_F)\to X_F$ is a log smooth model of $X_F$ by \cite[Lemma 00LM, 02VL, 02L1, 02L4]{Stacks}. Let $\eta_F$ denote the generic point of the corresponding irreducible component of $\bigcap^r_{i=1}E_{iF}$ with $\eta_K$. Then, by Lemma \ref{unc}, we have
$$S(\nu_{\alpha,\eta_F},L_F)=S(\nu_{\alpha,\eta_K},L_K),\qquad T(\nu_{\alpha,\eta_F},L_F)=T(\nu_{\alpha,\eta_K},L_K).$$
Let $\alpha=(\alpha_1,\cdots,\alpha_r)$. Then 
$$A_{X_F,\Delta_F}(\nu_{\alpha,\eta_F})=\sum_i \alpha_iA_{X_F,\Delta_F}(E_{iF})=\sum_i \alpha_iA_{X_K,\Delta_K}(E_{iK})=A_{X_K,\Delta_K}(\nu_{\alpha,\eta_K}).$$ 
Hence, replacing $K$ by $F$, we may assume that $K/k$ has finite transcendence degree.

\smallskip

Note that
$$ \Spec K=\lim_{\substack{\longleftarrow \\ U}}U,$$
where $U$ ranges over the $k$-varieties whose function fields are algebraic subfields of $K$. Then, by Lemma \ref{model} and \cite[Lemma 056V]{Stacks}, we may construct a smooth $k$-variety $U$ such that the algebraic closure of the function field of $U$ is $K$, and a log smooth model $(X'_U,E_U)\to (X_U,\Delta_U)$ over $U$ such that $(X'_U,E_U)\times_U K=(X'_K,E_K)$ (cf. \cite[Lemma 081F]{Stacks} for the proper descent and \cite[Proposition (9.9.4) (i)]{EGAIVIII} for the smooth descent). By using \cite[Lemma 0551]{Stacks} or \cite[Definition-Lemma 2.8]{Xu20}, after stratifying $U$ and taking an étale base change, we may assume that $(X'_U,E_U)\to X_U$ is a fiberwise log resolution over $U$.

\smallskip

Choose a closed point $s\in U$ and let $R\coloneqq \mathcal{O}_{U,s}$. Then the base change gives a fiberwise log resolution $(X'_R,E_R)\to X_R$ over $R$. Let $\nu\coloneqq \nu_{\alpha,\eta_k}$, where $\eta_k\in \bigcap^r_{i=1}E_{ik}$ is the generic point of the corresponding irreducible component of $\bigcap^r_{i=1}E_i$ with $\eta_K$. Applying Lemma \ref{dvr} gives the required inequalities.
\end{proof}

Let us prove the main theorem of this paper, Theorem \ref{thm-main}.

\begin{proof}[Proof of Theorem \ref{thm-main}]
We prove the theorem for $\delta$; the proof for $\alpha$ is analogous. 

Let $K/k$ be an uncountable algebraically closed field extension. Note that $(X_K,\Delta_K)$ is klt (cf. \cite[Lemma 2.14 ii)]{dFEM11}). By \cite[Theorem 1.2]{Pen25}, there exists a quasi-monomial valuation $\omega$ over $X_K$ computing $\delta(X_K,\Delta_K,L_K)$. By Lemma \ref{lem:descent}, there exists a quasi-monomial valuation $\nu$ over $X$ such that $A_{X,\Delta}(\nu)=A_{X_K,\Delta_K}(\omega)$ and $S(\nu,L)\ge S(\omega,L_K)$. Therefore,
$$
\frac{A_{X,\Delta}(\nu)}{S(\nu,L)}\le \frac{A_{X_K,\Delta_K}(\omega)}{S(\omega,L_K)}.
$$
By Lemma \ref{lem:one-sided-base-change}, we have $\delta(X_K,\Delta_K,L_K)\le \delta(X,\Delta,L)$.
On the other hand,
$$
\delta(X,\Delta,L) \le\frac{A_{X,\Delta}(\nu)}{S(\nu,L)}.
$$
Combining these inequalities yields
$$
\delta(X,\Delta,L)
\le
\frac{A_{X,\Delta}(\nu)}{S(\nu,L)}
\le
\frac{A_{X_K,\Delta_K}(\omega)}{S(\omega,L_K)}
=
\delta(X_K,\Delta_K,L_K)
\le
\delta(X,\Delta,L).
$$
Hence,
$$
\frac{A_{X,\Delta}(\nu)}{S(\nu,L)}
=
\delta(X,\Delta,L).
$$
Thus, $\nu$ computes $\delta(X,\Delta,L)$.

The same argument, replacing $S$ by $T$ proves the assertion for $\alpha$.
\end{proof}

\bibliographystyle{habbvr}
\bibliography{biblio}

\end{document}